\newcommand{\R}{\mathbb R}
\newcommand{\1}{{\mathbbm{1}}}
\newcommand{\E}{\mathbb E}
\renewcommand{\P}{\mathbb P}
\newcommand{\G}{\mathcal G}
\newcommand{\X}{\mathcal X}
\newcommand{\x}{\mathbf{x}}
\newcommand{\y}{\mathbf{y}}
\newcommand{\p}{\mathbf{p}}
\newcommand{\eff}{\rm{eff}}
\newcommand{\mix}{\rm{mix}}
\newcommand{\PPP}{\rm{PPP}}
\newcommand{\de}{\mathrm{d}}
\DeclareMathOperator{\Cov}{Cov}
\newtheorem{theorem}{Theorem}
\newtheorem{lemma}{Lemma}
\title [Subcriticality in weight-dependent random connection models]
{Only long edges can erase the subcritical annulus-crossing phase of weight-dependent
	random connection models}
\author[Emmanuel Jacob]{Emmanuel Jacob}
\begin{document}
\maketitle

\begin{quote}
{\small {\bf Abstract:} This short note aims at complementing the results of the recent work \cite{JL2023existence}, where Jahnel and L\"uchtrath investigate the question of existence of a subcritical percolation phase for the annulus-crossing probabilities in a large class of continuum percolation models, namely the classical or generalized weight-dependent random connection models. Their work relates the absence of a subcritical phase to the occurrence of long edges in the graph, through a somewhat indirect criterium, that stays inconclusive in some models of interest. We provide a more direct and arguably simpler criterium, that is always conclusive when considering classical weight-dependent random connection models.
}
\end{quote}

\vspace{0.5cm}

\section{Introduction}

This note focuses on the existence of a subcritical annulus-crossing phase in continuum percolation theory. We will consider classical or generalised weight-dependent random connection models, but we first discuss the case of the Poisson-boolean model (see~\cite{meester_roy_1996,Gouere08}) to motivate our work.

\medskip
In the Poisson-boolean model, we consider a random graph $\G^\lambda$ whose vertex set is given by a Poisson point process of intensity $\lambda>0$ on $\R^d$. The vertices are given i.i.d. radii (or weights), and two vertices $x$ and $y$ with radii $R_x$ and $R_y$ are connected if $|x-y|<R_x+R_y$, namely if the balls $B(x,R_x)$ and $B(y,R_y)$ intersect. This model exhibits a phase transition at a \emph{critical percolation intensity} $\lambda_c\in [0,+\infty)$: for $\lambda>\lambda_c,$ the graph $\G^\lambda$ has a.s. a (unique) infinite component, while for $\lambda<\lambda_c$, the connected components of $\G^\lambda$ are a.s. all finite. Of course, this is a true phase transition only if there exists a subcritical phase, namely if $\lambda_c>0$. This subcritical phase actually fails to exist if $\E[R^d]=+\infty$, which is a degenerate case in the sense that the vertices almost surely all have infinite degree, whatever the value of $\lambda>0$ (see~\cite{meester_roy_1996}). However, Gou\'er\'e proved in~\cite{Gouere08} that $\lambda_c>0$ whenever $\E[R^d]<+\infty$, showing the existence of a subcritical phase transition for all nondegenerate models. 
His proof relies on a renormalization lemma for the annulus-crossing probabilities. Taking $\lambda>0$ small enough allows to initialize a simple renormalization argument that shows that these probabilities tend to 0 when the size of the annuli tend to infinity, which shows $\lambda\le \lambda_c$. His technique also leads naturally to the introduction of the \emph{critical annulus-crossing intensity} $\widehat \lambda_c$, which automatically satisfies $\widehat \lambda_c\le\lambda_c$, as well as $\widehat \lambda_c>0$ by the aforementioned renormalization lemma. Interestingly, taking any $\lambda<\widehat \lambda_c$ also provides renormalization techniques that can be used to derive many properties of this phase, see~\cite{GourereTheret19}. It is thus natural to ask whether we have $\widehat \lambda_c= \lambda_c$, namely whether the two critical intensities coincide. This difficult question has been investigated in~\cite{ATT17,dembin2022sharp,DCRT2020}, and it is proven in~\cite{dembin2022sharp} that indeed $\widehat \lambda_c=\lambda_c$ when $R$ follows a power-law, except possibly for a countable number of exceptional power-law distributions.

\medskip

 We are however interested in a larger class of models that can feature long edges and scale-free degree distribution, namely the classical weight-dependent random connection models as in~\cite{gracar2022recurrence}, or a generalized version in~\cite{JL2023existence}. We first discuss the classical settings, which already include various previously studied models like long-range percolation, scale-free percolation or geometric inhomogeneous random graphs, and are closely related to hyperbolic random geometric graph, see also~\cite{jorritsma2023clustersize} for a recent work on such models. 
The vertices still have i.i.d. weights, but the probability of finding an edge between a vertex $x$ of weight $W_x$ and a vertex $y$ of weight $W_y$ is now of the form
\[
\varphi(W_x,W_y,|x-y|),
\]
for $\varphi$ some profile function satisfying some mild assumptions ensuring the degrees are finite. Actually~\cite{gracar2022recurrence} use a more specific form
\[
\varphi(W_x,W_y,|x-y|)=\rho(g(W_x,W_y)|x-y|^d),
\]
but we will not require this form in our work.
For these models, we still can naturally define $\lambda_c$ and $\widehat \lambda_c$, that satisfy $0\le \widehat \lambda_c\le \lambda_c$. The natural questions are now whether we have equalities $\lambda_c=0$, or $\widehat \lambda_c=0$, or $\lambda_c=\widehat \lambda_c$. The question of whether $\lambda_c=0$ has been investigated in various works, see in particular~\cite{gracar2021percolation} for a fairly general result. 

In~\cite{JL2023existence}, it has been  observed the interesting phenomenon that one may very-well have $0=\widehat \lambda_c<\lambda_c$. This can be viewed as a ``weak form of degeneracy'' for these models, that implies $\widehat \lambda_c=0$ without necessarily saying anything about $\lambda_c$. More precisely, they relate the property $\widehat \lambda_c=0$ to the occurence of long edges in the graph, as measured through a coefficient $\delta_{\eff}\ge 0$. In short, they obtain that if $\delta_{\eff}>2$, then long edges are rare and $\widehat \lambda_c>0$, while if $\delta_{\eff}<2$, long edges are common and $\widehat \lambda_c=0$. Their results leave the case $\delta_{\eff}=2$ open, which however includes some models of interest and in particular the age-dependent random connection model of~\cite{GGLM19_AgeRCM}.

We elaborate on~\cite{JL2023existence} by relating more directly the property $\widehat \lambda_c=0$ to the occurence of long edges in the graph. Our criterium always concludes whether we have $\widehat \lambda_c=0$ or not, at least in the case of the classical weight-dependent random connection models. Interestingly, \cite{JL2023existence}  introduce and study a generalized version of weight-dependent random connection models, where the connection probabilty between two vertices is allowed to further depend on the surrounding vertices of the graph. In this case we obtain partial results, which may complement their work. Note however that in the full generality introduced in~\cite{JL2023existence}, it is not necessarily the case that the percolation probabilities are monotonous in $\lambda$. It is thus unclear that there is a single percolation phase transition, and it seems dubious that much can be said without introducing further assumptions.

\medskip

We conclude this introduction with the following takeaway message from our work: If it is likely to find long edges in our graphs, namely edges of length of order $r$ in a ball of radius $r$, then this gives a weak form of degeneracy that implies $\widehat \lambda_c=0$ without necessarily implying $\lambda_c=0$. This leaves open the question whether we always have $\lambda_c= \widehat \lambda_c>0$ whenever this weak form of degeneracy does not occur.

\section{Framework}

We use almost the same settings and notations as~\cite{JL2023existence} to describe the generalized weight-dependent random connection model. For $\lambda>0$, we consider $\mathcal G^\lambda$ 
a random graph with vertex set $\mathcal X$ given by a Poisson Point Process (\PPP) on $\R^d\times (0,1)$ with intensity $\lambda>0$, where a vertex $\x=(x,u_x)$ is said to be located at position $x\in \R^d$ and with mark $u_x\in (0,1)$. We then connect independently each unordered pair of vertices $\x,\y \in \mathcal X$ with probability
\[
\p(\x,\y,\mathcal X \backslash\{\x,\y\}),
\]
where $\p$ is some functional invariant under translations and rotations. It further satisfies the following integrability condition: for any $\x$ and $\y$ in $\R^d\times (0,1)$, we have
\begin{equation}
\E_\lambda[\p(\x,\y,\X)]=\varphi_\lambda(u_x,u_y,|x-y|)
\end{equation}
for some measurable function $\varphi_\lambda:(0,1)\times (0,1)\times (0,+\infty)$. Note $\p$ does not depend on $\lambda$, but $\varphi_\lambda$ can depend on $\lambda$ because of the expectation taken with respect of a $\PPP$ with intensity $\lambda$. We suppose:
\begin{enumerate}
	\item $\varphi_\lambda$ is symmetric in the first two arguments and non-increasing in the third.
	\item The integral
	\[\int_0^1 \int_0^1 \int_{\R^d} \varphi_\lambda(s,t,|z|) \de z \de s \de t\]
	is finite.
\end{enumerate}
We will sometimes introduce further hypotheses
\begin{itemize}
	\item Mixing hypothesis: We just repeat the hypothesis introduced in~\cite{JL2023existence} to 
	 specify local percolation and quantify the influence of far-apart vertices on the connection mechanism. Write 
	\begin{align}
	\nonumber G(r,x)&:=\{\text{there is a path in }\G^\lambda \text{ from } B(x,r) \text{ to } B(x,2r)^\complement\\
	\nonumber& \null \qquad  \text{ using only vertices in } B(x,3r)
	\}\\
	\nonumber G(r)&:=G(r,0).
	\end{align}
	In these events we ask to find a crossing of some (large) annulus using only vertices contained in a larger ball containing the annulus. We now say that $\G^\lambda$ is mixing with index $\zeta>0$ if there exists $C_{\mix}>0$ such that for all $\lambda>0$ and all $|x|>6r$, we have
	\begin{equation} \label{ineq:mixing}
	\left|\Cov(\1_{G(r)},\1_{G(r,x)})\right|\le C_{\mix} \lambda r^{-\zeta}.
	\end{equation}
	\item Monotonicity hypothesis: 
	We say the model satisfies the monotonicity hypothesis if $\p$ is nondecreasing in the third argument, with the understanding that adding vertices in your $\PPP$ cannot decrease the edge probabilities of the existing vertices. 	If it holds,
	then $\varphi_\lambda$ is of course nondecreasing in $\lambda$, and more generally we can couple the graphs $\G^\lambda$ and $\G^{\lambda'}$ based on the $\PPP(\lambda)$ and $\PPP(\lambda')$ with $\lambda<\lambda'$, such that $\G^\lambda$ is a subgraph of $\G^{\lambda'}$. In particular, the percolation probability becomes nondecreasing in $\lambda$ so that we can naturally introduce the \emph{classical critical percolation intensity} $\lambda_c$, as well as the \emph{critical annulus-crossing intensity} $\widehat \lambda_c$, defined by	
	\begin{align*}
	\widehat \lambda_c&:= \sup\left\{\lambda>0, \P_\lambda\left(B(0,r)\leftrightarrow B(0,2r)^\complement\right)\to 0\right\} \\
	&= \inf\left\{\lambda>0, \limsup \P_\lambda \left(B(0,r)\leftrightarrow B(0,2r)^\complement\right)>0\right\},
	\end{align*}
	where we write $A\leftrightarrow B$ for the existence in $\G^\lambda$ of a path started at a vertex located in $A$ and ending at a vertex located in $B$. Without the monotonicity assumption, we still can define $\widehat \lambda_c$, but we cannot easily guarantee for $\lambda>\widehat \lambda_c$ that the %annulus-crossing 
	probabilities $\P_\lambda \left(B(0,r)\leftrightarrow B(0,2r)^\complement\right)$ tend to 0 as $r$ tends to infinity. Note that~\cite{JL2023existence} does not state the monotonicty hypothesis, and introduce as an example a model that does not satisfy this hypothesis.
	\item Classical weight-dependent random connection model: In the classical model, $\p$ does not depend on the surrounding vertices, so $\p(\x,\y, \X)=\p(\x,\y)= \varphi(u_x,u_y,|x-y|)$, where of course $\varphi$ does not depend on $\lambda$. This is a strong assumption, that implies both the mixing and monotonicity assumptions. This assumption is also denoted as the ``$\zeta=\infty$'' case in~\cite{JL2023existence}.
\end{itemize}

This note aims at understanding in which cases we have a subcritical phase, i.e. $\widehat \lambda_c>0$.

\section{Results}

We claim that the main reason that can lead to an absence of a subcritical phase is the presence of long edges in the graph. For $r>0$ and $c>0$, we define the ``long edge event''
\begin{equation}
L(r,c):= \left\{\exists x\sim y, |x|<r, |y-x|>cr\right\},
\end{equation}
namely the event that we can find an edge of length at least $cr$ with one endvertex in the ball of radius $r$ centered at the origin. We will also consider $H(\lambda,c)$ the property
\[
\left\{\limsup_{r\to \infty}\ \P_\lambda\left(L(r,c)\right)>0.
\right\}
\]
In words, $H(\lambda,c)$ holds if it is not unlikely to find long edges -- namely edges of length comparable to the size of the ball where we search for them -- when $r$ tends to infinity. The following simple property makes this heuristic description actually quite pertinent:
\begin{lemma}
	$H(\lambda,c)$ does not depend on $c>0$.
\end{lemma}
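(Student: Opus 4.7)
The plan is to show the two inclusions $H(\lambda,c_1) \Rightarrow H(\lambda,c_2)$ in both directions. One direction is immediate: if $c_2 \le c_1$, then any edge of length greater than $c_1 r$ is also of length greater than $c_2 r$, so $L(r,c_1) \subseteq L(r,c_2)$ and hence $\P_\lambda(L(r,c_2)) \ge \P_\lambda(L(r,c_1))$ for every $r$; taking $\limsup$ gives $H(\lambda,c_1) \Rightarrow H(\lambda,c_2)$.

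The interesting direction is the reverse, i.e. passing from a smaller $c_1$ to a larger $c_2$. The idea is a covering/translation-invariance argument. Suppose $c_2>c_1$ and set $R := (c_1/c_2) r$, so $R<r$. For each $z\in \R^d$, let $L(r,c;z) := \{\exists x\sim y,\ |x-z|<r,\ |y-x|>cr\}$; by translation invariance of the $\PPP$ and of the connection functional $\p$, we have $\P_\lambda(L(r,c;z)) = \P_\lambda(L(r,c))$. Cover the ball $B(0,r)$ by $N = N(c_1,c_2,d)$ balls $B(z_i,R)$, where $N$ can be chosen of order $(r/R)^d = (c_2/c_1)^d$, a constant independent of $r$. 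If $L(r,c_1)$ holds with witnessing edge $\{x,y\}$, then $x\in B(z_i,R)$ for some $i$, and $|y-x|>c_1 r = c_2 R$, so the translated event $L(R,c_2;z_i)$ holds. Therefore
\[
L(r,c_1) \subseteq \bigcup_{i=1}^N L(R,c_2;z_i),
\]
and a union bound together with translation invariance gives
\[
\P_\lambda(L(r,c_1)) \le N\,\P_\lambda(L(R,c_2)).
\]

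Since $R \to \infty$ as $r\to \infty$ and $N$ is a fixed constant, taking $\limsup$ yields
\[
\limsup_{R\to \infty} \P_\lambda(L(R,c_2)) \ge \frac{1}{N}\limsup_{r\to \infty}\P_\lambda(L(r,c_1)),
\]
so that $H(\lambda,c_1)$ implies $H(\lambda,c_2)$. Combining both directions, $H(\lambda,c)$ is the same property for every $c>0$. There is no real obstacle here: the only point requiring mild care is to formulate the translated event $L(r,c;z)$ and invoke translation invariance of both the Poisson process and the connection functional $\p$ to equate $\P_\lambda(L(r,c;z))$ with $\P_\lambda(L(r,c))$; everything else is a simple geometric covering.
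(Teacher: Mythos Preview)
Your proof is correct and follows essentially the same approach as the paper: the easy monotone direction, followed by a covering of the larger ball by a fixed number of smaller balls together with translation invariance and a union bound. The only cosmetic difference is the choice of which radius to rescale (you cover $B(0,r)$ by balls of radius $R=(c_1/c_2)r$, whereas the paper covers $B(0,c'r/c)$ by balls of radius $r$), which amounts to the same inequality after a change of variable.
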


\begin{proof}
	Clearly $H(\lambda,c)\Rightarrow H(\lambda,c')$ for all $c'<c$. Consider now $c'>c>0$. Since the model is translation-invariant, and since the number of balls of radius $r$ required to cover a ball of radius $c'r/c$ is some number $n(c'/c)$ depending only on $c'/c$ and not on $r$, we have
	\[
	n(c'/c)\ \P_\lambda\left(\exists  x\sim y, x\in B(0,r), |y-x|>c'r\right) \ge
	\P_\lambda\left(\exists  x\sim y, x\in B(0,c'r/c), |y-x|>c'r\right).
	\]
	If $H(\lambda,c)$ holds, then the limsup of the RHS is positive, thus the limsup of the LHS is also positive, and we deduce $H(\lambda,c')$ as required. 
%	 $B(0,c'r/c)$ with a number $N(c'/c)$ of balls of radius $r$, 
%	
%	By spatial homogeneity, we have
%	\[\P_\lambda\left(\exists  x\sim y, x\in B(0,r), |y-x|>c'r\right)
%	\ge 
%	\left(\frac {c} {c'}\right)^d \P_\lambda\left(\exists  x\sim y, x\in B(0,c'r/c), |y-x|>c'r\right),
%	\]
%	and the result follows.
\end{proof}
In the following, we will write $H(\lambda)$ for the property $H(\lambda, 1)$ (which by the lemma is actually the same as $H(\lambda, c)$ for any given value $c>0$). Under the additional assumption that we work with a classical weight-dependent random connection model, the following property makes the assumption $H(\lambda)$ even more interesting.
\begin{lemma}\label{lem:H_does_not_depend_on_lambda}
	For the classical weight-dependent random connection models, the property $H(\lambda)$ does not depend on $\lambda>0$. 
\end{lemma}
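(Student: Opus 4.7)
The plan is to bound $\P_\lambda(L(r,1))$ and $\P_{\lambda'}(L(r,1))$ by each other up to a constant factor depending only on $\lambda,\lambda'$ (not on $r$). Passing to the limsup then gives $H(\lambda)\Leftrightarrow H(\lambda')$. Without loss of generality take $\lambda<\lambda'$ and set $p:=\lambda/\lambda'\in (0,1)$.

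For the easy inequality $\P_\lambda(L(r,1))\le \P_{\lambda'}(L(r,1))$, I would invoke the monotone coupling that exists for classical models: the $\PPP$ of intensity $\lambda'$ can be written as the superposition of an independent $\PPP(\lambda)$ and a $\PPP(\lambda'-\lambda)$, and since in the classical model the edge $\{\x,\y\}$ is drawn as an independent Bernoulli with parameter $\varphi(u_x,u_y,|x-y|)$ that does not depend on the other points, the graph $\G^\lambda$ is a vertex-induced subgraph of $\G^{\lambda'}$ under this coupling. Any long edge in $\G^\lambda$ is thus a long edge in $\G^{\lambda'}$.

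The reverse inequality is where the key observation lies. I would couple $\G^\lambda$ to $\G^{\lambda'}$ by independent Bernoulli $p$-thinning of the vertices of $\G^{\lambda'}$, which in the classical setting produces exactly a graph distributed as $\G^\lambda$ (again because edges are drawn independently from the surrounding vertices). On the event $L(r,1)$ in $\G^{\lambda'}$, fix a deterministic procedure selecting a witnessing pair $(\x^*,\y^*)$ (say, the lexicographically smallest among all long edges). Since the thinning marks are independent of the PPP and of the Bernoulli edge variables, conditioning on the full $\PPP(\lambda')$ and its edges, both $\x^*$ and $\y^*$ survive the thinning with probability exactly $p^2$, and on that event the edge $\{\x^*,\y^*\}$ is still present in $\G^\lambda$, hence $L(r,1)$ occurs in $\G^\lambda$. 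This gives
\[
\P_\lambda(L(r,1))\ge p^2\,\P_{\lambda'}(L(r,1)).
\]

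Taking $\limsup_{r\to\infty}$ in both inequalities, $\limsup_r \P_\lambda(L(r,1))>0$ if and only if $\limsup_r \P_{\lambda'}(L(r,1))>0$, so $H(\lambda)$ and $H(\lambda')$ are equivalent. The only delicate point is to justify that thinning and witness selection decouple correctly; this is precisely where the classical assumption (edges depending only on their two endpoints and their marks, independently of the surrounding PPP) is essential, and the argument would not go through for the generalised models of~\cite{JL2023existence}.
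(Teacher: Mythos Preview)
Your argument is correct and is essentially the same as the paper's: both use the Bernoulli $p$-thinning coupling of $\G^{\lambda'}$ to $\G^\lambda$ (valid precisely because in the classical model edges depend only on their two endpoints) to obtain the two-sided bound $(\lambda/\lambda')^2\,\P_{\lambda'}(L(r,1))\le \P_\lambda(L(r,1))\le \P_{\lambda'}(L(r,1))$ and then pass to the limsup. Your witness-selection step is a careful justification of the lower bound that the paper states without further detail.
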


\begin{proof}
	For the classical weight-dependent random connection models and $0<\lambda<\lambda'$, we can couple $\G^\lambda$ and $\G^{\lambda'}$, so that $\G^\lambda$ is the random subgraph of $\G^{\lambda'}$ obtained by keeping each vertex of $\G^{\lambda'}$ independently with probability $\lambda/\lambda'$. Therefore
	\[
	%\left(\frac \lambda {\lambda'}\right)^2
	 (\lambda/\lambda')^2\ \P_{\lambda'}(L(r,1))\le \P_\lambda(L(r,1))\le \P_{\lambda'}(L(r,1)).
	\]
	The result follows immediately.
\end{proof}

We now state our main result, which shows that the property $H(\lambda)$ is intimately linked to the nonexistence of a subcritical phase. 
\pagebreak[3]

\begin{theorem}
	\begin{enumerate}
		\item In the classical weight-dependent random connection model, the property $H(\lambda)$ does not depend on $\lambda>0$, and is equivalent to $\widehat \lambda_c=0$.
		\item In the generalized weight-dependent random connection model:
	\begin{enumerate}
	\item If $H(\lambda)$ holds for all $\lambda>0$, then $\widehat \lambda_c=0$.
	\item If $H(\lambda)$ does not hold for some $\lambda>0$, and if additionally we make the monotonicity and mixing assumptions, then $\widehat \lambda_c>0$.	
	\end{enumerate}	
	\end{enumerate}
\end{theorem}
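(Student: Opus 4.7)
The theorem has three assertions: (1), (2)(a), and (2)(b). Part (1) will follow from (2)(a) and (2)(b) combined with Lemma \ref{lem:H_does_not_depend_on_lambda}, since the classical weight-dependent random connection model automatically satisfies both the monotonicity and mixing hypotheses. I therefore focus on (2)(a) and (2)(b). Part (2)(a) is essentially immediate from Lemma 1: by the equivalent formulation $H(\lambda,3)$, with positive limsup probability there exists an edge $x\sim y$ with $|x|<r$ and $|y-x|>3r$; any such edge has $|y|>2r$ and therefore already witnesses $B(0,r)\leftrightarrow B(0,2r)^\complement$. Hence $\limsup_{r\to\infty}\theta_\lambda(r)>0$ where $\theta_\lambda(r):=\P_\lambda(B(0,r)\leftrightarrow B(0,2r)^\complement)$, so $\lambda\ge\widehat\lambda_c$; if $H(\lambda)$ holds for every $\lambda>0$, then $\widehat\lambda_c=0$.

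Part (2)(b) is the substantive direction, and my plan is a Gou\'er\'e-style renormalization. The monotonicity hypothesis makes $L(r,c)$ an increasing event in the graph, so $\neg H(\lambda_0)$ extends to $\neg H(\lambda)$ for every $\lambda\in(0,\lambda_0]$; we are thus free to work with $\lambda$ as small as needed. The heart of the proof is to establish, for constants $c_0,C_d,C'_d>0$ depending only on the dimension and with $\rho$ a suitable small fraction of $r$, a renormalization inequality of the shape
\[
\theta_\lambda(2r) \;\le\; C_d\,\theta_\lambda(\rho)^2 \;+\; C'_d\,C_{\mix}\,\lambda\,\rho^{-\zeta} \;+\; \P_\lambda(L(4r,c_0)).
\]
To derive it, we split $\{B(0,2r)\leftrightarrow B(0,4r)^\complement\}$ into the long-edge event $L(4r,c_0)$ and its complement. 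On the complement every edge along a witnessing self-avoiding path has length at most $c_0 r$, so the path crosses two concentric spheres of intermediate radii via two vertex-disjoint sub-paths, each localised in a ball of radius $\asymp\rho$ centred at a point on its sphere. After a finite (dimension-only) union bound over a net of possible centres, each sub-path subsumes a $G(\rho,x_i)$-type event and the two base-points can be chosen with $|x_1-x_2|>6\rho$, so that the mixing bound \eqref{ineq:mixing} converts the joint probability into the displayed sum.

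Once the renormalization inequality is available, the iteration is standard. Fix $\eps>0$ and use $\neg H(\lambda)$ to pick $r_0$ so large that $\P_\lambda(L(4r,c_0))<\eps$ and $C'_d C_{\mix}\lambda\rho^{-\zeta}<\eps$ for all $r\ge r_0$. For this fixed $r_0$, the Mecke formula combined with integrability of $\varphi_\lambda$ (uniform in $\lambda\le\lambda_0$ by monotonicity) yields $\theta_\lambda(r_0)=O(\lambda^2)$ as $\lambda\to 0$, so by further shrinking $\lambda$ we ensure $\theta_\lambda(r_0)\le\eta_0$, with $\eta_0$ the small fixed point of $\eta\mapsto C_d\eta^2+2\eps$. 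A straightforward induction yields $\theta_\lambda(2^n r_0)\le\eta_0$ for all $n$, and a refinement in which $\eps$ is replaced by its (vanishing) value at scale $2^n r_0$ gives $\theta_\lambda(2^n r_0)\to 0$; interpolating to arbitrary $r\to\infty$ via one further application of the same inequality then yields $\widehat\lambda_c\ge\lambda>0$.

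The main obstacle I anticipate is the geometric decomposition step: producing, in the absence of long edges, two $G(\rho,x_i)$-type witnesses at centres separated by more than $6\rho$ requires a careful simultaneous choice of $c_0$, $\rho$, and the net of centres, together with a verification that each local sub-path only uses vertices inside $B(x_i,3\rho)$. All these constraints can be satisfied by taking $c_0$ much smaller than $\rho$ and $\rho$ a fixed small fraction of $r$, but the combinatorial bookkeeping is the technical core of the argument.
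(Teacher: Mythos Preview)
Your proposal is correct and follows essentially the same route as the paper: (1) is reduced to (2) via Lemma~\ref{lem:H_does_not_depend_on_lambda}, (2)(a) is obtained from $H(\lambda,3)$ giving a single edge that already crosses the annulus, and (2)(b) is proved by a Gou\'er\'e-type renormalization inequality with a long-edge error term controlled by $\neg H(\lambda)$, a mixing error term, and an initialization obtained by shrinking $\lambda$ (where monotonicity keeps the error terms under control). The only cosmetic differences are your choice of scale factor $2$ and separate parameter $\rho$ versus the paper's factor $10$ with $\rho=r$, and your Mecke-based $O(\lambda^2)$ initialization versus the paper's cruder $O(\lambda)$ bound via the probability of finding any vertex in $B(0,r)$; both variants work.
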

Part (1) of the theorem directly follows from Part (2) together with Lemma~\ref{lem:H_does_not_depend_on_lambda}, so we only need to prove Part (2). 
Part (2)(a) follows from the simple observation that the event $L(r,3)$ implies the existence of an edge between a vertex in $B(0,r)$ and a vertex at distance larger than $3r$, thus necessarily located outside $B(0,2r)$. Hence $H(\lambda,3)\Rightarrow \lambda\ge \widehat \lambda_c$.

Finally, Part (2)(b) of the theorem is a consequence of the following lemma:

\begin{lemma}
	Suppose $H(\lambda,1/20)$ does not hold, and work under the monotonicity and mixing assumptions. Then $\widehat \lambda_c>0$.
\end{lemma}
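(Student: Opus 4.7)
My approach is to establish a Gou\'er\'e-style renormalization inequality for $p(R):=\P_\lambda(G(R))$ and iterate. First, any crossing $B(0,r)\leftrightarrow B(0,2r)^\complement$ either stays inside $B(0,3r)$ (triggering $G(r)$) or uses an edge of length at least $r$ with an endpoint in $B(0,2r)$, so
$$
\P_\lambda\bigl(B(0,r)\leftrightarrow B(0,2r)^\complement\bigr)\le p(r)+\P_\lambda(L(2r,1/2)),
$$
and the last term vanishes as $r\to\infty$ by the hypothesis together with Lemma~1. Proving $\widehat\lambda_c>0$ therefore reduces to showing $p(R)\to 0$ for a single positive $\lambda$, and by the monotonicity hypothesis $H(\lambda',1/20)$ fails for every $\lambda'\le\lambda$, so I may take $\lambda$ as small as needed.

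The heart of the proof is a renormalization inequality of the form $p(R)\le C_d\,p(R/20)^2+\epsilon(R)$ with $\epsilon(R)\to 0$. Fix $r=R/20$ and $s=R/100$, and split
$$
p(R)\le\P_\lambda\bigl(G(R)\cap L(3R,1/300)^\complement\bigr)+\P_\lambda(L(3R,1/300));
$$
the second term tends to $0$ by Lemma~1 and the hypothesis. On the first event, a self-avoiding realization $x_0,\dots,x_k$ of the crossing lies in $B(0,3R)$ with every edge of length at most $s$. Letting $i_1$ be the first index with $|x_{i_1}|\ge 1.2R$ and $i_2$ the first with $|x_{i_2}|\ge 1.8R$, the vertices $y_j:=x_{i_j}$ satisfy $|y_1|\in[1.2R,1.2R+s]$ and $|y_2|\in[1.8R,1.8R+s]$. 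I would then cover each of these two thin annuli by a collection of balls of radius $r/2$ whose cardinality $N_d$ depends only on the dimension; let $\mathcal Z_1,\mathcal Z_2$ be the sets of centres, and pick $z_j\in\mathcal Z_j$ with $|y_j-z_j|\le r/2$.

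Tracing the path forward from $y_1$ until its first exit $x_{j_1}$ of $B(z_1,2r)$ produces a sub-path lying inside $B(z_1,2r+s)\subseteq B(z_1,3r)$ (using $s\le r$) with $y_1\in B(z_1,r)$ and $x_{j_1}\in B(z_1,2r)^\complement\cap B(z_1,3r)$, which is precisely the event $G(r,z_1)$; the same construction around $y_2$ yields $G(r,z_2)$. Because $|z_1-z_2|\ge 1.8R-1.21R-r>6r$, translation invariance and the mixing hypothesis give
$$
\P_\lambda\bigl(G(r,z_1)\cap G(r,z_2)\bigr)\le p(r)^2+C_{\mix}\lambda r^{-\zeta},
$$
and a union bound over the at most $N_d^2$ pairs $(z_1,z_2)$ delivers the desired inequality, with $\epsilon(R)=N_d^2C_{\mix}\lambda(R/20)^{-\zeta}+\P_\lambda(L(3R,1/300))\to 0$.

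It remains to initialize and iterate. Choose $\lambda>0$ small enough that $H(\lambda,1/20)$ still fails and $p(R_0)<1/(4C_d)$ at some fixed scale $R_0$; the latter is possible because $p(R_0)$ is bounded by the probability that the region $B(0,3R_0)\times(0,1)$ contains any Poisson point, which tends to $0$ as $\lambda\to 0$. Setting $q_n:=\sup_{R\ge 20^n R_0}p(R)$, the renormalization inequality yields $q_{n+1}\le C_d q_n^2+\epsilon_n$ with $\epsilon_n\to 0$, a recursion whose standard analysis forces $q_n\to 0$; hence $p(R)\to 0$ and $\widehat\lambda_c\ge\lambda>0$. The main obstacle I anticipate is the geometric bookkeeping: matching the edge-length cutoff $s$, the crossing scale $r$, and the positions of the centres $z_j$ so that the short-edge sub-paths fit inside $B(z_j,3r)$ and the mixing separation $|z_1-z_2|>6r$ hold with a single consistent choice of parameters.
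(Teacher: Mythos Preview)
Your approach is essentially the same Gou\'er\'e-style renormalization as the paper's proof (the paper works with $C(r)$ and a scale factor~$10$, and explicitly declines to spell out the geometric covering argument you carry out in detail). There is, however, one small slip in the initialization: you only ensure $p(R_0)<1/(4C_d)$, but your recursion variable is $q_0=\sup_{R\ge R_0}p(R)$, which is not controlled by $p(R_0)$ alone and could a priori equal~$1$. The fix is exactly what the paper indicates: make $p(R)$ small uniformly on the whole interval $R\in[R_0,20R_0]$ by bounding it above by the probability that $B(0,60R_0)\times(0,1)$ contains any Poisson point, and then iterate with $a_n:=\sup_{R\in[20^nR_0,\,20^{n+1}R_0]}p(R)$ (so that $a_{n+1}\le C_d a_n^2+\epsilon_n$ and $a_0$ is genuinely small) rather than with your $q_n$.
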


\begin{proof}
 	Recall the event $G(r)$ defined in the mixing hypothesis, and introduce the following events:
	\begin{align*}
	C(r)&:=\{B(x,r) \leftrightarrow B(x,2r)^\complement\},\\
%	G(r)&:=\{B(0,r) \leftrightarrow B(0,3r) \backslash B(0,2r)\},\\
	F(r)&:=\{\exists x\sim y, x\in B(0,20r), |y-x|>r\}.
	\end{align*}
	Using the mixing property, we obtain a renormalization lemma similar\footnote{The only difference in our renormalization lemma is that we work with the events $C(r)$ rather than with $G(r)$, which avoids us to separately have to control the probability of $C(r)\backslash G(r)$.} to the one stated in~\cite{JL2023existence}: 
	\begin{align*}
	\P_\lambda(C(10 r))&\le  C \P_\lambda(G(r))^2+ \P_\lambda(F(r))+ C C_{mix} \lambda r^{-\zeta}\\
	&\le  C \P_\lambda(C(r))^2+ \P_\lambda(F(r))+ C C_{mix} \lambda r^{-\zeta},
	\end{align*}
	where $C$ is an explicit constant depending only on the dimension. We do not detail this classical argument, but just observe that for classical weight-dependent random connection models, the last term with $C_{\mix}$ disappears, as in the original renormalization lemma of~\cite{Gouere08}. 	
	In order to use such a renormalization lemma to deduce that $\P_\lambda(G(r))$ tends to 0 as $r\to +\infty$, two ingredients are necessary:
	\begin{itemize}
		\item We need the ``error terms'' $\P_\lambda(F(r))+ C C_{mix} \lambda r^{-\zeta}$ tending to 0 as $r\to \infty$. The mixing term clearly does, while we obtain $\P_\lambda(F(r)) \to 0$ by the hypothesis that $H(\lambda, 1/20)$ does not hold.
		\item We need to initialize the renormalization argument by ensuring that $\P_\lambda(C(r))$ is small on some interval $r\in [r_0,10 r_0]$. To this end, we can simply bound roughly the probability of $C(r)$ by the probability of finding any point of the $\PPP$ in the ball $B(0,r)$, and \emph{decrease} sufficiently the value of $\lambda$ to make this probability small. While decreasing the value of $\lambda$, we use the monotonicity property to ensure the error term does not increase\footnote{Note that in models not satisfying the monotonicity assumption, we could still conclude if we have a good control of the probabilities $\P_\lambda(F(r))$, which in many cases should not be too hard to obtain.}. 
	\end{itemize}
	This classical renormalization technique thus allows to conclude.
\end{proof}

\bibliographystyle{arxiv}
\bibliography{mabiblio}

\end{document}